\documentclass[10pt]{amsart}
\usepackage{latexsym,amsmath,amssymb,amscd}
\textwidth16 cm
\textheight22 cm
\oddsidemargin.4cm
\evensidemargin.4cm

\def\today{\ifcase \month \or
   January \or February \or March \or April \or
   May \or June \or July \or August \or
   September \or October \or November \or December \fi
   \space\number\day , \number\year}

\begin{document}

\DeclareRobustCommand{\SkipTocEntry}[4]{} 

\makeatletter
\@addtoreset{figure}{section}
\def\thefigure{\thesection.\@arabic\c@figure}
\def\fps@figure{h,t}
\@addtoreset{table}{bsection}

\def\thetable{\thesection.\@arabic\c@table}
\def\fps@table{h, t}
\@addtoreset{equation}{section}
\def\theequation{
\arabic{equation}}
\makeatother

\newcommand{\bfi}{\bfseries\itshape}

\newtheorem{theorem}{Theorem}
\newtheorem{acknowledgment}[theorem]{Acknowledgment}
\newtheorem{algorithm}[theorem]{Algorithm}
\newtheorem{axiom}[theorem]{Axiom}
\newtheorem{case}[theorem]{Case}
\newtheorem{claim}[theorem]{Claim}
\newtheorem{conclusion}[theorem]{Conclusion}
\newtheorem{condition}[theorem]{Condition}
\newtheorem{conjecture}[theorem]{Conjecture}
\newtheorem{construction}[theorem]{Construction}
\newtheorem{corollary}[theorem]{Corollary}
\newtheorem{criterion}[theorem]{Criterion}
\newtheorem{definition}[theorem]{Definition}
\newtheorem{example}[theorem]{Example}
\newtheorem{lemma}[theorem]{Lemma}
\newtheorem{notation}[theorem]{Notation}
\newtheorem{problem}[theorem]{Problem}
\newtheorem{proposition}[theorem]{Proposition}
\newtheorem{remark}[theorem]{Remark}
\numberwithin{theorem}{section}
\numberwithin{equation}{section}

\newcommand{\todo}[1]{\vspace{5 mm}\par \noindent
\framebox{\begin{minipage}[c]{0.95 \textwidth}
\tt #1 \end{minipage}}\vspace{5 mm}\par}

\newcommand{\1}{{\bf 1}}
\newcommand{\Ad}{{\rm Ad}}
\newcommand{\Alg}{{\rm Alg}\,}
\newcommand{\Aut}{{\rm Aut}\,}
\newcommand{\ad}{{\rm ad}}
\newcommand{\Ci}{{\mathcal C}^\infty}
\newcommand{\de}{{\rm d}}
\newcommand{\ee}{{\rm e}}
\newcommand{\ev}{{\rm ev}}
\newcommand{\fin}{{\rm fin}}
\newcommand{\id}{{\rm id}}
\newcommand{\ie}{{\rm i}}
\newcommand{\GL}{{\rm GL}}
\newcommand{\gl}{{{\mathfrak g}{\mathfrak l}}}
\newcommand{\Hom}{{\rm Hom}\,}
\newcommand{\Img}{{\rm Im}\,}
\newcommand{\Ker}{{\rm Ker}\,}
\newcommand{\lf}{{\rm l}}
\newcommand{\Lie}{\text{\bf L}}
\newcommand{\m}{\textbf{m}}
\newcommand{\OO}{{\rm O}}
\newcommand{\pr}{{\rm pr}}
\newcommand{\Ran}{{\rm Ran}\,}
\renewcommand{\Re}{{\rm Re}\,}
\newcommand{\rb}{\text{\bf r}}
\newcommand{\Sp}{{\rm Sp}}
\renewcommand{\sp}{{{\mathfrak s}{\mathfrak p}}}
\newcommand{\spa}{{\rm span}\,}
\newcommand{\Tr}{{\rm Tr}\,}

\newcommand{\G}{{\rm G}}
\newcommand{\U}{{\rm U}}

\newcommand{\Ac}{{\mathcal A}}
\newcommand{\Bc}{{\mathcal B}}
\newcommand{\Cc}{{\mathcal C}}
\newcommand{\Ec}{{\mathcal E}}
\newcommand{\Dc}{{\mathcal D}}
\newcommand{\Hc}{{\mathcal H}}
\newcommand{\Jc}{{\mathcal J}}
\renewcommand{\Mc}{{\mathcal M}}
\newcommand{\Nc}{{\mathcal N}}
\newcommand{\Oc}{{\mathcal O}}
\newcommand{\Pc}{{\mathcal P}}
\newcommand{\Tc}{{\mathcal T}}
\newcommand{\Vc}{{\mathcal V}}
\newcommand{\Uc}{{\mathcal U}}

\newcommand{\Bg}{{\mathfrak B}}
\newcommand{\Fg}{{\mathfrak F}}
\newcommand{\Gg}{{\mathfrak G}}
\newcommand{\Ig}{{\mathfrak I}}
\newcommand{\Jg}{{\mathfrak J}}
\newcommand{\Lg}{{\mathfrak L}}
\newcommand{\Pg}{{\mathfrak P}}
\newcommand{\Sg}{{\mathfrak S}}
\newcommand{\Xg}{{\mathfrak X}}
\newcommand{\Yg}{{\mathfrak Y}}
\newcommand{\Zg}{{\mathfrak Z}}

\newcommand{\ag}{{\mathfrak a}}
\newcommand{\bg}{{\mathfrak b}}
\newcommand{\dg}{{\mathfrak d}}
\renewcommand{\gg}{{\mathfrak g}}
\newcommand{\hg}{{\mathfrak h}}
\newcommand{\kg}{{\mathfrak k}}
\newcommand{\mg}{{\mathfrak m}}
\newcommand{\n}{{\mathfrak n}}
\newcommand{\og}{{\mathfrak o}}
\newcommand{\pg}{{\mathfrak p}}
\newcommand{\sg}{{\mathfrak s}}
\newcommand{\tg}{{\mathfrak t}}
\newcommand{\ug}{{\mathfrak u}}

\makeatletter
\title[On the characterization of Gelfand-Shilov-Roumieu spaces]
{On the characterization of Gelfand-Shilov-Roumieu spaces}
\author{Mihai Pascu}
\address{Institute of Mathematics ``Simion Stoilow'' 
of the Romanian Academy, 
RO-014700 Bucharest, Romania and ``Petroleum-Gas'' University of Ploiesti, Bd. Bucuresti, 39, Ploiesti}
\email{mihai.pascu@imar.ro}
\thanks{This research has been partially supported by the Grant
of the Romanian National Authority for Scientific Research, CNCS-UEFISCDI,
project number PN-II-ID-PCE-2011-3-0131.}

\keywords{Gelfand-Shilov-Roumieu vectors, Heisenberg group}
\subjclass[2010]{Primary 42B35; Secondary 22E45,46F15}
\date{\today}
\makeatother

\begin{abstract} 
Generalized $\mathbf{m}$-Gelfand-Shilov-Roumieu vector spaces $\mathcal{S}_{\mathbf{m}}(\mathbf{X})$ are introduced. Here  $\mathbf{m}=(m^{(1)},...,m^{(n)})$, $\mathbf{X}=(X_{1},...,X_{n})$ and  $m^{(1)},...,m^{(n)}$ are sequences of positive real numbers and  $X_{1},...,X_{n}$ are operators in a Hilbert space. Our definition extends Ter Elst's definition of Gevrey vector spaces [TE2]. Conditions are given on the sequences $m^{(1)},...,m^{(n)}$ and on the operators $X_{1},...,X_{n}$ so that the equality ${S}_{\mathbf{m}}(\mathbf{X})={S}_{m^{(1)}}(X_{1})\cap...\cap{S}_{m^{(n)}}(X_{n})$ is valid. As a corollary we obtain a new proof of a characterization theorem for classical Gelfand-Shilov spaces.
\end{abstract}

\maketitle

\tableofcontents

\section{Introduction}\label{sectionIntr}

\par{The GS (Gelfand-Shilov) spaces were introduced in the last chapter of the second volume of I. M. Gelfand and G. E. Shilov's monograph on Generalized Functions [GS]. They are subspaces of Schwartz's space ([Sch]) $\mathcal{S}$ of rapidly decreasing functions. Usually a $C^{\infty}$ function $\varphi:\mathbb{R}^{n}\rightarrow \mathbb{C}$ is said to belong to a GS space $\mathcal{S}^{a}_{b}$, $a,b \in \mathbb{R}_{+}^{n}$ if there exist positive constants $C, A$ and $B$ so that}

\begin{equation} 
\vert x^{\alpha} \partial^{\beta}\varphi (x) \vert \leq CA^{\vert \alpha \vert} B^{\vert \beta \vert} (\alpha!)^{a} (\beta!)^{b}, \forall x \in \mathbb{R}^{n}, \forall \alpha, \beta \in \mathbb{N}^{n}. 
\end{equation}

\par{ We have used the following notations: $x^{\alpha}=x^{\alpha_{1}}_{1}...x^{\alpha_{n}}_{n}$, $\vert \alpha \vert = \alpha_{1}+...+\alpha_{n}$, $\alpha!^{a}=\alpha_{1}!^{a_{1}}...\alpha_{n}!^{a_{n}}$ for $\alpha = (\alpha_{1},...,\alpha_{n})$ and $a=(a_{1},...,a_{n})$. In the same monograph Gelfand and Shilov introduced more general spaces. Let $m^{(1)}$,...,$m^{(n)}$, $q^{(1)}$,..., $q^{(n)}$ be sequences of positive real numbers, $\mathbf{m}=(m^{(1)},...,m^{(n)})$, $\mathbf{q}=(q^{(1)},...,q^{(n)})$. A rapidly decaying function $\varphi:\mathbb{R}^{n}\rightarrow \mathbb{C}$ belongs to the space $\mathcal{S}^{\mathbf{q}}_{\mathbf{m}}$ if there exist positive constants $C, A$ and $B$ such that

\begin{equation}
\vert x^{\alpha} \partial^{\beta}\varphi (x) \vert \leq CA^{\vert \alpha \vert} B^{\vert \beta \vert} \mathbf{m}_{\alpha}\mathbf{q}_{\beta}, \forall x \in \mathbb{R}^{n}, \forall \alpha, \beta \in \mathbb{N}^{n},
\end{equation}
where $\mathbf{m}_{\alpha}=m_{\alpha_{1}}^{(1)}...m_{\alpha_{n}}^{(n)}$.}

\par{ These spaces were also thoroughly studied in Roumieu's PhD thesis ([R]). In fact, in [GS] and [R], only functions which depended on a single variable were considered, but the generalization of their definition to more variables is straightforward. In this paper we shall call the spaces of functions which satisfy (1.1) GS spaces of Gevrey type and the more general spaces of functions which satisfy (1.2) Gelfand-Shilov-Roumieu (GSR) spaces.}
\par{ A series of papers have been devoted to the study of these spaces and alternative characterizations have been provided. We can distinguish two approaches, a pure analytical one, see [Ka], [CCK], [GZ], [Te1], [Te2] for GS spaces of Gevrey type and [Ba], [CCK], [Pa1], [Pa2] for GSR spaces, and another one which involves more functional analytic and Lie groups theory methods, see [GW], [EG], [Ei], [TE1], [TE2] for GS spaces of Gevrey type. In this context, we have also to mention the seminal works of Nelson ([Ne]), Nussbaum ([Nu]) and Goodman ([Go]).}

\par{ In [Ka] Kashpirovski answered to a question raised in [GS] and proved the equality $\mathcal{S}^{b}_{a}=\mathcal{S}^{b}\cap \mathcal{S}_{a}$ for $n=1$, $a,b \geq 0$, where} 
\begin{center}
$\mathcal{S}_{a}=\left\{\varphi \in \mathcal{S} ; \forall \beta \in \mathbb{N}, \exists C=C(\beta), \exists A>0,  \vert x^{\alpha} \partial^{\beta}\varphi (x) \vert \leq CA^{ \alpha } \alpha!^{a},\forall x \in \mathbb{R}, \forall \alpha \in \mathbb{N} \right\}$,
\end{center}
and
\begin{center}
$\mathcal{S}^{b}=\left\{\varphi \in \mathcal{S}; \forall \alpha \in \mathbb{N}, (\exists) C=C(\alpha), \exists B>0,   \vert x^{\alpha} \partial^{\beta}\varphi (x) \vert \leq CB^{\beta} \beta!^{b}, \forall x \in \mathbb{R}, \forall \beta\in  \mathbb{N} \right\}$.
\end{center}

\par{In [CCK] a similar result was proved for GSR spaces. If $m^{(1)}=...=m^{(n)}=m$, $q^{(1)}=...=q^{(n)}=q$, if these sequences satisfy assumptions (A1), (A2) and an assumption slightly weaker than (A3) ((A1), (A2), and (A3) will be introduced in the next section), then a function $\varphi$ belongs to $\mathcal{S}^{\mathbf{q}}_{\mathbf{m}}$ if and only if there exist positive constants $C, A$ and $B$ such that}

\begin{center}
$\vert x^{\alpha} \varphi (x) \vert \leq CA^{\vert \alpha \vert}  m_{\vert \alpha \vert}$, $\vert  \partial^{\beta}\varphi (x) \vert \leq C B^{\vert \beta \vert} q_{\vert \beta \vert}$, $\forall x \in \mathbb{R}^{n}$, $\forall \alpha, \beta \in \mathbb{N}^{n}$.
\end{center}

\par{In [TE2], TerElst obtained Kashpirovsky's result as a corollary of a general theorem on Gevrey vectors relative to finite families of operators in a Hilbert space $\mathcal{H}$. For $n\in \mathbb{N}^{\ast}$ let $M(n)$ be the set of multiindices $\bigcup _{k=1}^{\infty} \left\{1,...,n \right\}^{k}$. If $\alpha \in M(n)$ is so that $\alpha = (i_{1},...,i_{k}) \in \left\{1,...,n \right\}^{k}$ we put $\vert \alpha \vert = k$ ($\vert \alpha \vert $ is the length of $\alpha$) and, for every $j \in \left\{1,...,n \right\}$, $\vert \alpha \vert _{j} = \text{card} \left \{ l \in \left\{1,...,n \right\} ; i_{l} =j \right\}$.
}
\par{If $X_{1},..., X_{n}$ are operators in $\mathcal{H}$ and if $u\in \mathcal{H}$, then $u$ is called a Gevrey vector of order  $\lambda =( \lambda_{1},...,\lambda_{n}) $  relative to $X_{1},..., X_{n}$ ($u \in \mathcal{S} _{\lambda_{1},...,\lambda{n}}(X_{1},...,X_{n})$) if $u$ belongs to the domain of the operator $X_{i_{1}}...X_{i_{k}}$, $\forall k \in \mathbb{N}$ and $\forall \alpha =( i_{1},...,i_{k}) \in M(n)$ and if there exist some positive constants $C$ and $A$ so that
\begin{center}
 $\Vert X_{i_{1}}...X_{i_{k}}u \Vert \leq CA^{k}\alpha!^{\lambda}, \forall k \in \mathbb{N}$.
\end{center}} 
 \par{Now $ \alpha!^{\lambda}= \vert \alpha \vert _{1}^{\lambda_{1}}...\vert \alpha \vert _{n}^{\lambda_{n}}$. Ter Elst investigated the problem of finding appropriate conditions on $\lambda_{1},...,\lambda_{n}$ and  $X_{1},..., X_{n}$ in order that $\mathcal{S} _{\lambda_{1},...,\lambda{n}}(X_{1},...,X_{n})=\bigcap^{n}_{j=1} \mathcal{S}_{\lambda_{j}}(X_{j})$.}
\par{As a corollary of his results one obtains Kashpirovsky's theorem.}
\par{ We prove that Ter Elst's approach works also for the more general spaces of Gelfand-Shilov-Roumieu vectors where $\alpha!^{\lambda}$ is replaced with $\mathbf{m}_{\alpha}$, if the sequences $m^{(1)},...,m^{(n)}$ satisfy assumptions (A1)-(A3) from the next section. Chung, Chung, Kim's result can be obtained as a corollary of this approach. Let us stress that other general results on the intersections of Gevrey spaces are valid for the more general spaces of Gelfand-Shilov-Roumieu vectors. We shall give an example in the third section.} 

\section{The Heisenberg group}\label{section Heis}

\par{First of all we have to introduce some more notations.}
\par{  If $\alpha = (i_{1},...,i_{k}),\beta= (j_{1},...,j_{l}) \in M(n)$, then $\alpha \vee \beta =(i_{1},...,i_{k},j_{1},...,j_{l})$ is their concatenation. The reverse of $\alpha=(i_{1},...,i_{k})$ is $\alpha^{r}=(i_{k},...,i_{1})$.}
\par{ If $m^{(j)}=(m^{(j)}_{p})_{p}, \forall j \in \left\{ 1,...,n \right\}$ are sequences of positive real numbers and $\alpha \in M(n)$, then we put $\mathbf{m}_{\alpha}=m^{(1)}_{\vert \alpha \vert _{1}} ...m^{(n)}_{\vert \alpha \vert _{n}}$, $\mathbf{m}'=(m^{(1)},...,m^{(n-1)})$. Also $\alpha' \in M(n-1)$ is the multiindex which is obtained from $\alpha = (i_{1},...,i_{k})$ after the elimination of its components $i_{l}$ with $i_{l}=n$ and $\mathbf{m}'_{\alpha'}=m^{(1)}_{\vert \alpha \vert _{1}} ...m^{(n-1)}_{\vert \alpha \vert _{n-1}}$ .}

\par{ For an operator $Y$ in a Hilbert space $\mathcal{H}$ we denote with $Dom(Y)$ its domain of definition and if $Y$ and $Z$ are operators in $\mathcal{H}$, $[Y,Z]$ denotes their commutator, whenever this can be defined. Let $X_{1},...,X_{n}$ be operators in $\mathcal{H}$, $\mathbf{X}=(X_{1},...,X_{n})$. If $\alpha=(i_{1},...,i_{k})$, then $\mathbf{X}_{\alpha}=X_{i_{1}}...X_{i_{k}}$. We shall also use the notation $\mathbf{X}'=(X_{1},...,X_{n-1})$. The space of $\mathbf{X}-$smooth vectors is

\begin{center}
$C^{\infty}(\mathbf{X})= \left\{ u\in \mathcal{H}; u \in Dom(\mathbf{X}_{\alpha}), \forall \alpha \in M(n) \right\}$.
\end{center}
}
\par{ The space of $\mathbf{m}-$Gelfand-Shilov-Roumieu(GSR) vectors relative to $\mathbf{X}$ is

\begin{center}
$\mathcal{S}_{\mathbf{m}}(\mathbf{X})=\left\{ u\in C^{\infty}(\mathbf{X}); \exists A,C>0 \text{ such that } \Vert \mathbf{X}_{\alpha}u \Vert \leq CA^{\vert \alpha \vert }\mathbf{m}_{\alpha}, \forall \alpha \in M(n) \right\}$.
\end{center}
}
\par{We shall impose the following conditions on the sequences $m^{(j)}$:}
\par{(A0) $m^{(j)}_{0}=m^{(j)}_{1}=1,\forall j \in \left\{ 1,...,n \right\}$;}
\par{(A1) $(m^{(j)}_{p})^{2} \leq m^{(j)}_{p-1}m^{(j)}_{p+1},\forall j \in \left\{ 1,...,n \right\}, \forall p \in \mathbb{N}^{\ast}$ (logarithmic convexity);}
\par{(A2) $\exists H>0$ so that $m^{(j)}_{p+q} \leq H^{p+q} m^{(j)}_{p}m^{(j)}_{q},\forall j \in \left\{ 1,...,n \right\}, \forall p,q \in \mathbb{N}$ (the ultradifferentiability condition of Komatsu ([K]).}

\begin{remark}
\normalfont
 If $m^{(j)}$ is a logarithmic convex sequence and if $i_{0}$ is so that $m^{(j)}_{i_{0}} \leq  m^{(j)}_{i_{0}+1}$, then $m^{(j)}_{i} \leq  m^{(j)}_{i+1}, \forall i \geq i_{0}$. Therefore if $m^{(j)}$ is an unbounded sequence, there exists $i_{0}$ so that $1 \leq m^{(j)}_{i} \leq  m^{(j)}_{i+1}, \forall i \geq i_{0}$. Since the definition of the spaces $\mathcal{S}_{\mathbf{m}}(\mathbf{X})$ does not depend on the values of a finite number of terms of the sequences $m^{(j)}$, the technical assumption (A0) is not a restrictive one in  this case. Also we can assume without any loss of generality that $m^{(j)}$ are nondecreasing sequences.
\end{remark}
\par{The fourth assumption which we shall need is in the spirit of Definition 2.2 from [CCK] and ensures that the GSR spaces, as they are defined in [GS], are not trivial. This assumption relates two sequences $m^{(j)}$  and $m^{(k)}$:
\par{
(A3) $\exists L\geq1$ so that $pm^{(j)}_{p-1}m^{(k)}_{p-1} \leq L m^{(j)}_{p}m^{(k)}_{p}, \forall p \geq1$.}
}
\begin{lemma}
If $m^{(j)}$ is logarithmic convex, then 
\begin{equation*}
\frac{m^{(j)}_{p}}{m^{(j)}_{q}} \leq \frac{m^{(j)}_{p+1}}{m^{(j)}_{q+1}} \text{ for } q<p.
\end{equation*}
\end{lemma}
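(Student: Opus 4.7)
The plan is to reformulate the target inequality as a statement about the monotonicity of consecutive-term ratios, which is how logarithmic convexity is naturally encoded. Rearranging, the desired inequality
\[
\frac{m^{(j)}_{p}}{m^{(j)}_{q}} \leq \frac{m^{(j)}_{p+1}}{m^{(j)}_{q+1}}
\]
is equivalent to
\[
\frac{m^{(j)}_{p+1}}{m^{(j)}_{p}} \geq \frac{m^{(j)}_{q+1}}{m^{(j)}_{q}},\qquad q<p,
\]
so it suffices to show that the sequence of ratios $r_{k}:=m^{(j)}_{k+1}/m^{(j)}_{k}$ is nondecreasing in $k$.

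The monotonicity of $(r_k)$ is immediate from assumption (A1): dividing $(m^{(j)}_{k})^{2}\leq m^{(j)}_{k-1}m^{(j)}_{k+1}$ by $m^{(j)}_{k-1}m^{(j)}_{k}$ gives $r_{k-1}\leq r_{k}$ for every $k\geq 1$ (positivity of the $m^{(j)}_k$'s, guaranteed by the standing hypotheses on the sequences, is used here to divide and to keep the inequality direction).

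Given this, I would finish by telescoping: write
\[
\frac{m^{(j)}_{p}}{m^{(j)}_{q}}=\prod_{k=q}^{p-1} r_{k},\qquad \frac{m^{(j)}_{p+1}}{m^{(j)}_{q+1}}=\prod_{k=q}^{p-1} r_{k+1},
\]
and compare the two products factor by factor using $r_k\leq r_{k+1}$. Equivalently, one can induct on $p-q$, the base case $p=q+1$ being exactly (A1) and the inductive step following from $r_{p}\leq r_{p+1}$. No step looks like a genuine obstacle; the only thing to be careful about is ensuring positivity of all terms so that the manipulations with ratios are legitimate, which is already part of the setup.
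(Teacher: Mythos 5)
Your proposal is correct and follows essentially the same route as the paper: both arguments note that (A1) makes the consecutive ratios $m^{(j)}_{k+1}/m^{(j)}_{k}$ nondecreasing and then compare $m^{(j)}_{p}/m^{(j)}_{q}$ with $m^{(j)}_{p+1}/m^{(j)}_{q+1}$ by writing each as a telescoping product of such ratios and shifting factor by factor. The cross-multiplied reformulation you start with is just an equivalent phrasing of the same comparison, so there is no substantive difference.
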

\begin{proof}
If $m^{(j)}$ is logarithmic convex, then 
\begin{equation*}
\frac{m^{(j)}_{p}}{m^{(j)}_{p-1}}\leq \frac{m^{(j)}_{p+1}}{m^{(j)}_{p}},\forall p \in \mathbb{N}^{\ast}.
\end{equation*}
 
\par{Therefore 
\begin{equation*}
\frac{m^{(j)}_{p}}{m^{(j)}_{q}} = \frac{m^{(j)}_{p}}{m^{(j)}_{p-1}}...\frac{m^{(j)}_{q+1}}{m^{(j)}_{q}} \leq \frac{m^{(j)}_{p+1}}{m^{(j)}_{p}}...\frac{m^{(j)}_{q+2}}{m^{(j)}_{q+1}} = \frac{m^{(j)}_{p+1}}{m^{(j)}_{q+1}}.
\end{equation*}
}
\end{proof}

\par{ The next result is a generalization of Lemma 1 from [TE2].}

\begin{lemma}
Let $X_{1},...,X_{n}$ be Hermitian or skew-Hermitian operators in a Hilbert space $\mathcal{H}$. Let $\mathbf{m}=(m^{(1)},...,m^{(n)})$ be such that $m^{(j)}$ satisfy (A2) $\forall j \in \left\{ 1,...,n \right\}$. Then $u\in \mathcal{S}_{\mathbf{m}}(\mathbf{X})$ if and only if $u\in C^{\infty}(\mathbf{X})$ and there exist two positive constants $A$ and $C$ such that
\begin{equation}
\vert (\mathbf{X}_{\alpha}u,u) \vert \leq CA^{\vert \alpha \vert}\mathbf{m}_{\alpha},\forall \alpha \in M(n).
\end{equation}
\end{lemma}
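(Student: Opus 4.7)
The forward implication is essentially Cauchy--Schwarz: if $\|\mathbf{X}_\alpha u\|\le CA^{|\alpha|}\mathbf{m}_\alpha$, then $|(\mathbf{X}_\alpha u,u)|\le \|u\|\,\|\mathbf{X}_\alpha u\|\le C\|u\|A^{|\alpha|}\mathbf{m}_\alpha$, so no real work is needed.

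For the nontrivial direction, the plan is to turn the norm of $\mathbf{X}_\alpha u$ into an inner product involving a multi-index of length $2|\alpha|$, and then use (A2) to trade the length doubling for a square. Concretely, for $u\in C^\infty(\mathbf{X})$ the vectors $\mathbf{X}_\beta u$ are defined for every $\beta\in M(n)$, and since each $X_j$ is Hermitian or skew-Hermitian we have $X_j^\ast=\varepsilon_j X_j$ with $\varepsilon_j\in\{+1,-1\}$. Writing $\alpha=(i_1,\dots,i_k)$, an iterated application of the (skew)symmetry gives
\begin{equation*}
(\mathbf{X}_\alpha u,\mathbf{X}_\alpha u)=\Bigl(\prod_{l=1}^{k}\varepsilon_{i_l}\Bigr)\,(u,\mathbf{X}_{\alpha^r}\mathbf{X}_\alpha u)=\pm\,(\mathbf{X}_{\alpha^r\vee\alpha}u,u),
\end{equation*}
so that $\|\mathbf{X}_\alpha u\|^2=|(\mathbf{X}_{\alpha^r\vee\alpha}u,u)|$. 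The domains are fine because $u\in C^\infty(\mathbf{X})$ puts $u$ in $\mathrm{Dom}(\mathbf{X}_\gamma)$ for every $\gamma$, including $\gamma=\alpha^r\vee\alpha$.

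Applying the hypothesis (2.1) to the multi-index $\alpha^r\vee\alpha$, which has length $2|\alpha|$ and satisfies $|\alpha^r\vee\alpha|_j=2|\alpha|_j$ for each $j$, yields
\begin{equation*}
\|\mathbf{X}_\alpha u\|^2\le CA^{2|\alpha|}\,\mathbf{m}_{\alpha^r\vee\alpha}=CA^{2|\alpha|}\prod_{j=1}^{n}m^{(j)}_{2|\alpha|_j}.
\end{equation*}
Assumption (A2) applied coordinatewise with $p=q=|\alpha|_j$ gives $m^{(j)}_{2|\alpha|_j}\le H^{2|\alpha|_j}(m^{(j)}_{|\alpha|_j})^2$, and multiplying over $j$ produces $\mathbf{m}_{\alpha^r\vee\alpha}\le H^{2|\alpha|}\mathbf{m}_\alpha^2$. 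Substituting and taking square roots I obtain $\|\mathbf{X}_\alpha u\|\le \sqrt{C}\,(AH)^{|\alpha|}\mathbf{m}_\alpha$, which is what is required.

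The only points that need a bit of care are the sign bookkeeping (harmless, since we only need the absolute value) and the domain issue when iterating the (skew)symmetry; both dissolve once $u\in C^\infty(\mathbf{X})$ is invoked. The main structural ingredient is (A2): without the submultiplicativity $m^{(j)}_{p+q}\le H^{p+q}m^{(j)}_pm^{(j)}_q$, there is no way to absorb the length doubling introduced by the $\alpha^r\vee\alpha$ trick, so I expect that any obstacle in generalizations would come from relaxing (A2).
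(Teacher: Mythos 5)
Your proposal is correct and follows essentially the same route as the paper: Cauchy--Schwarz for the easy direction, and for the converse the identity $\Vert \mathbf{X}_{\alpha}u \Vert^{2} = \vert (\mathbf{X}_{\alpha^{r}\vee\alpha}u,u) \vert$ combined with (A2) applied with $p=q=\vert\alpha\vert_{j}$ to absorb the doubled indices. Your extra remarks on the sign bookkeeping and on domains only make explicit what the paper leaves implicit.
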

\begin{proof}
If the estimates (2.1) hold for some $u\in C^{\infty}(\mathbf{X})$ and if $\alpha \in M(n)$, then

\begin{equation*}
\begin{split}
\Vert \mathbf{X}_{\alpha}u \Vert ^{2} &= (\mathbf{X}_{\alpha}u,\mathbf{X}_{\alpha}u)= \vert (\mathbf{X}_{\alpha^{r} \vee \alpha} u,u) \vert \leq CA^{2\vert \alpha \vert} \mathbf{m}_{\alpha^{r} \vee \alpha}= CA^{2\vert \alpha \vert} m^{(1)}_{2\vert \alpha \vert _{1}}...m^{(n)}_{2\vert \alpha \vert _{n}} \leq\\
&\leq CA^{2\vert \alpha \vert}H^{2\vert \alpha \vert} (m^{(1)}_{\vert \alpha \vert _{1}})^{2}...(m^{(n)}_{\vert \alpha \vert _{n}})^{2} = C(AH)^{2\vert \alpha \vert}(\mathbf{m}_{\alpha})^{2}.
\end{split}
\end{equation*}

\par{ If $u\in \mathcal{S}_{\mathbf{m}}(\mathbf{X})$, then (1) follows from Schwarz's inequality: $\vert (\mathbf{X}_{\alpha}u,u) \vert \leq \Vert \mathbf{X}_{\alpha}u \Vert \Vert u \Vert$.}
\end{proof}

\begin{proposition}
Let $X_{1},...,X_{n}$ be Hermitian or skew-Hermitian operators in a Hilbert space $\mathcal{H}$ defined on a common invariant domain $D$, $m^{(1)},...,m^{(n)}$ sequences of positive numbers which satisfy (A0)-(A2). If the pair $(m^{(j)},m^{(n)})$ satisfies (A3) and if $[X_{j},X_{n}]=c_{j}I$ for some $c_{j} \in \mathbb{C}$ for every $j \in \left\{ 1,...,n-1 \right\}$, then $u\in D$ belongs to $\mathcal{S}_{\mathbf{m}}(\mathbf{X})$ if and only if $u \in\mathcal{S}_{\mathbf{m}'}(\mathbf{X}')\cap \mathcal{S}_{m^{(n)}}(X_{n})$.
\end{proposition}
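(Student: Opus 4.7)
The forward direction is immediate: specializing the defining estimate of $\mathcal{S}_{\mathbf{m}}(\mathbf{X})$ to $\alpha\in M(n-1)\subset M(n)$ yields $u\in\mathcal{S}_{\mathbf{m}'}(\mathbf{X}')$, and specializing to the constant multiindex $(n,\ldots,n)$ yields $u\in\mathcal{S}_{m^{(n)}}(X_n)$. For the converse, since $u\in D$ and $D$ is invariant under each $X_i$, one has $u\in C^{\infty}(\mathbf{X})$, so by Lemma 2.3 it suffices to prove
\begin{equation*}
|(\mathbf{X}_\alpha u,u)|\le CA^{|\alpha|}\mathbf{m}_\alpha
\end{equation*}
for every $\alpha\in M(n)$. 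Throughout, write $p=|\alpha|_n$, $a_j=|\alpha|_j$ for $j<n$, and $m=\sum_{j<n}a_j$.

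Because $[X_j,X_n]=c_jI$ is scalar, iterating the identity $X_nX_j=X_jX_n-c_jI$ moves every occurrence of $X_n$ in $\mathbf{X}_\alpha$ to the right, producing a normal-ordered expansion
\begin{equation*}
\mathbf{X}_\alpha=\sum_{\phi} c_\phi\,\mathbf{X}_{\beta_\phi}X_n^{p-s_\phi},
\end{equation*}
where $\phi$ runs over the injective matchings of $s_\phi$ positions of $X_n$ in $\alpha$ with $s_\phi$ positions of non-$n$ letters lying to their right, $c_\phi$ is the corresponding signed product of commutator scalars, and $\beta_\phi\in M(n-1)$ is $\alpha'$ with the matched non-$n$ positions removed. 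Since $X_1,\ldots,X_{n-1}$ are Hermitian or skew-Hermitian, $(\mathbf{X}_{\beta_\phi}X_n^{p-s_\phi}u,u)=\pm(X_n^{p-s_\phi}u,\mathbf{X}_{\beta_\phi^{r}}u)$, so Schwarz's inequality combined with $u\in\mathcal{S}_{m^{(n)}}(X_n)\cap\mathcal{S}_{\mathbf{m}'}(\mathbf{X}')$ bounds the modulus of each term by a constant times $B^{p-s_\phi}A^{m-s_\phi}m^{(n)}_{p-s_\phi}\prod_{j<n}m^{(j)}_{a_j-s_{j,\phi}}$, where $s_{j,\phi}$ counts the matched $X_j$'s in $\phi$.

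Grouping by $s=s_\phi$ and by the tuple $(s_j)_{j<n}$ of matched multiplicities, the number of such $\phi$ is at most $\tfrac{p!}{(p-s)!}\prod_{j<n}\binom{a_j}{s_j}$, and the whole argument reduces to the combinatorial inequality
\begin{equation*}
\sum_{s,(s_j)}\frac{p!}{(p-s)!}\prod_{j<n}\binom{a_j}{s_j}\bigl(\max_j|c_j|\bigr)^{s}B^{p-s}A^{m-s}m^{(n)}_{p-s}\prod_{j<n}m^{(j)}_{a_j-s_j}\le C'(A')^{p+m}m^{(n)}_p\prod_{j<n}m^{(j)}_{a_j}.
\end{equation*}
The plan is to invoke (A3) for each pair $(m^{(j)},m^{(n)})$ in order to absorb either the factorial $\tfrac{p!}{(p-s)!}$ or the factorials $\binom{a_j}{s_j}s_j!=\tfrac{a_j!}{(a_j-s_j)!}$ into the $m$-sequences at the cost of a geometric factor $L^s$, and then to use Lemma 2.4 to relocate the residual ratios $\tfrac{m^{(j)}_p}{m^{(j)}_{p-s_j}}$ or $\tfrac{m^{(n)}_{a_j}}{m^{(n)}_{a_j-s_j}}$ onto the sequence against which they cancel.

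The principal technical obstacle is precisely this combinatorial balancing. Since (A3) couples $m^{(j)}$ and $m^{(n)}$ at a common index, the $s$ factorial factors must be distributed among the $n-1$ available pairs according to the multiplicities $(s_j)$, and the two regimes $p\ge a_j$ and $p<a_j$ must be handled separately, by applying (A3) at index $p$ in the first case and at index $a_j$ in the second, in the spirit of the two-variable Heisenberg argument. Once this bookkeeping is carried out, the remaining summation over $s$ and $(s_j)$ collapses into a geometric series bounded by $(A')^{p+m}$, yielding the desired estimate.
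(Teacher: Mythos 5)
Your setup is fine: the forward direction by restriction, the reduction via Lemma 2.3 to estimating $|(\mathbf{X}_\alpha u,u)|$, the Wick-type normal ordering of $\mathbf{X}_\alpha$ using the centrality of $[X_j,X_n]$, and the per-term Schwarz bound are all correct. But the proof stops exactly where the proposition becomes nontrivial. Everything hinges on the displayed ``combinatorial inequality,'' and for it you offer only a plan (``the plan is to invoke (A3)\dots'', ``once this bookkeeping is carried out\dots''); it is never proved. This is not a routine omission: that inequality is precisely the place where (A1)--(A3) must interact, and a term-by-term application of (A3) as you describe does not go through as stated. (A3) can only be applied at a \emph{common} index of $m^{(j)}$ and $m^{(n)}$, so your proposal to ``apply (A3) at index $p$'' when $p\ge a_j$ is not executable: the $m^{(j)}$-index available is only $a_j-s_j$, and Lemma 2.2 shifts the ratios in the wrong direction for that purpose. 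In the unbalanced regime $p\gg\sum_{j<n}a_j$ with $s$ comparable to $\sum a_j$, the factor $p!/(p-s)!$ cannot be absorbed by (A3) plus monotonicity alone; one must additionally bring in (A2) (e.g.\ to compare $m^{(j)}_p$ with $(m^{(j)}_{p/2})^2$-type quantities), and the distribution of the factorial among the $n-1$ pairs $(m^{(j)},m^{(n)})$ according to the multiplicities $(s_j)$ has to be organized carefully. None of this bookkeeping is done, so the claimed collapse of the sum into a geometric series is an assertion, not a proof.

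By contrast, the paper never produces the global factorial at all. It proves, by induction on $|\gamma|$, the estimate $|(X_n^l\mathbf{X}_\gamma\mathbf{X}'_{\alpha'}u,u)|\le C^2(2nL)^{|\gamma|}A^{|\alpha'|+|\gamma|+l}\mathbf{m}'_{\alpha'\vee\gamma'}m^{(n)}_{l+|\gamma|_n}$, peeling one commutator at a time: writing $\mathbf{X}_\gamma=\mathbf{X}_\delta X_jX_n^i$ produces a single extra factor $i-1\le|\gamma|_n$ (not a product of such factors), which is absorbed by splitting into the cases $|\gamma'|\ge|\gamma|_n$ and $|\gamma'|<|\gamma|_n$; in the first case a pigeonhole argument gives an index $p$ with $|\delta'|_p\ge(i-1)/n$, and then (A3) applied at the small common index $\nu+1$, together with Lemma 2.2 to transport the ratios up to the actual indices, yields the bound with the controlled constant $2nL$; in the second case one commutes from the other side. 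If you want to keep your all-at-once expansion, you must actually prove your combinatorial inequality under (A0)--(A3) (which will require (A2) and a case analysis essentially equivalent to the paper's); alternatively, adopt the incremental induction. As it stands, the argument has a genuine gap at its central step.
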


\begin{proof}
Let $ u \in \mathcal{S}_{\mathbf{m}'}(\mathbf{X}')\cap \mathcal{S}_{m^{(n)}}(X_{n}) \cap D$. Then there exist some constants $A,C>0$ so that
\begin{center}
$\Vert \mathbf{X}'_{\alpha'}u \Vert \leq CA^{\vert \alpha' \vert}\mathbf{m}'_{\alpha'}, \forall \alpha' \in M(n-1)$,
\end{center}
\begin{center}
$\Vert X_{n}^{l}u \Vert \leq CA^{l}m_{l}^{(n)},\forall l \in \mathbb{N}$.
\end{center}

\par{ We shall apply Lemma 2.3. We shall prove by induction on $\vert \gamma \vert$ that
\begin{equation}
\vert (X_{n}^{l}\mathbf{X}_{\gamma}\mathbf{X}'_{\alpha'}u,u) \vert \leq C^{2}(2nL)^{\vert \gamma \vert} A^{\vert \alpha' \vert + \vert \gamma \vert +l} \mathbf{m}'_{\alpha' \vee \gamma' }m^{(n)}_{l+ \vert \gamma \vert _{n}}, \forall l \in \mathbb{N}, \forall \alpha' \in M(n-1), \forall \gamma \in M(n),
\end{equation}
for $A \geq \text{max} \left\{\text{max} \left\{ \vert c_{j} \vert;j  \in \left\{ 1,...,n-1 \right\}\right\},1 \right\}$ and $L$ sufficiently large, so that the inequalities (A3) hold for all the pairs  $(m^{(j)},m^{(n)})$  for every $j \in \left\{ 1,...,n-1 \right\}$. Remark that if $l=0$, $\vert \alpha' \vert =0$, then (2.2) coincides with (2.1).}
\par{If $\vert \gamma \vert = 0$, then 
\begin{center}
$\vert (X_{n}^{l} \mathbf{X}'_{\alpha'}u,u) \vert = \vert ( \mathbf{X}'_{\alpha'}u,X_{n}^{l}u) \vert \leq \Vert \mathbf{X}'_{\alpha'}u \Vert \cdot \Vert X_{n}^{l}u \Vert \leq C^{2}A^{l+ \vert \alpha' \vert} \mathbf{m}'_{\alpha'} m^{(n)}_{l}$
\end{center}
and (2.2) is true in this case.}

\par{Let us assume that the estimate is true for all multiindices of length $k$ and let $\gamma$ be a multiindex of length $k+1$. There are two possibilities: $\vert \gamma' \vert \geq \vert \gamma \vert _{n}$ or $\vert \gamma' \vert < \vert \gamma \vert _{n}$. We consider first the case when $\vert \gamma' \vert \geq \vert \gamma \vert _{n}$.}
\par{ If $\gamma = (i_{1},...,i_{k+1})$ and $i_{k+1} \in \left\{ 1,...,n-1 \right\}$ we have nothing to prove. So we assume that $X_{\gamma}=X_{\delta}X_{j}X_{n}^{i}$ for some $\delta \in M(n)$, $j \in \left\{ 1,...,n-1 \right\}$ and  $i \in \mathbb{N}^{\ast}$. We have
\begin{center}
$[X_{j},X_{n}^{i}]=(i-1)c_{j}X_{n}^{i-1}$,
\end{center}
so
\begin{center}
$X_{j}X_{n}^{i}=X_{n}^{i}X_{j}+(i-1)c_{j}X_{n}^{i-1}$.
\end{center}
}
\par{Then
\begin{equation*}
\begin{split}
\vert (X_{n}^{l}\mathbf{X}_{\gamma} \mathbf{X}'_{\alpha'}u,u) \vert &\leq \vert (X_{n}^{l}\mathbf{X}_{\delta}X_{n}^{i}X_{j}\mathbf{X}'_{\alpha'}u,u) \vert + (i-1) \vert c_{j} \vert \text{ } \vert (X_{n}^{l}\mathbf{X}_{\delta}X_{n}^{i-1}\mathbf{X}'_{\alpha'}u,u) \vert \leq\\
&\leq C^{2}(2nL)^{\vert \gamma \vert -1} A^{l+\vert \gamma \vert -1+ \vert \alpha' \vert +1} \mathbf{m}'_{\delta'\vee (j) \vee \alpha'} m^{(n)}_{l+i+\vert \delta \vert _{n}}+\\
&+C^{2}(i-1) \vert c_{j} \vert (2nL)^{\vert \gamma \vert -2} A^{l+\vert \gamma \vert -2+ \vert \alpha' \vert} \mathbf{m}'_{\delta'\vee \alpha'} m^{(n)}_{l+i-1+\vert \delta \vert _{n}} \leq\\
&\leq C^{2}(2nL)^{\vert \gamma \vert -1} A^{l+\vert \gamma \vert + \vert \alpha' \vert}\mathbf{m}'_{\alpha' \vee \gamma'}m^{(n)}_{l+\vert \gamma \vert _{n}} +\\
&+C^{2}(i-1) \vert (2nL)^{\vert \gamma \vert -2} A^{l+\vert \gamma \vert -1+ \vert \alpha' \vert} \mathbf{m}'_{\delta'\vee \alpha'} m^{(n)}_{l+\vert \gamma \vert _{n}-1} .
\end{split}
\end{equation*}
}
\par{Since $\vert \gamma' \vert \geq \vert \gamma \vert _{n} \geq i$ and, consequently, $\vert \delta' \vert \geq i-1$, there exists $p\in \left\{1,...,n-1 \right\}$ so that $ \vert \delta' \vert _{p} \geq \frac{i-1}{n} \geq \left[ \frac{i-1}{n} \right] \equiv \nu$. Using Lemma 2.1 and (A3) for the pair $(m^{(p)},m^{(n)})$, we obtain
\begin{equation*}
\begin{split}
(i-1)m^{(p)}_{\vert \alpha' \vee \delta' \vert _{p}}m^{(n)}_{l+\vert \gamma \vert _{n}-1} &= \frac{i-1}{\nu}\nu m^{(p)}_{\nu}m^{(n)}_{\nu} \frac{m^{(p)}_{\vert \alpha' \vee \delta' \vert _{p}}}{m^{(p)}_{\nu}} \frac{m^{(n)}_{l+\vert \gamma \vert _{n}-1}}{m^{(n)}_{\nu}} \leq\\
&\leq L\frac{i-1}{\nu} m^{(p)}_{\nu+1}m^{(n)}_{\nu+1} \frac{m^{(p)}_{\vert \alpha' \vee \delta' \vert _{p}+1}}{m^{(p)}_{\nu+1}} \frac{m^{(n)}_{l+\vert \gamma \vert _{n}}}{m^{(n)}_{\nu+1}}  \leq\\
&\leq 2nL m^{(p)}_{\vert \alpha' \vee \delta' \vert _{p}+1}m^{(n)}_{l+\vert \gamma \vert _{n}}
\end{split}
\end{equation*}
for $i\geq 1$.}
\par{Therefore
\begin{center}
$\vert (X_{n}^{l}\mathbf{X}_{\gamma} \mathbf{X}'_{\alpha'}u,u) \vert \leq C^{2}A^{l+\vert \gamma \vert + \vert \alpha' \vert} 2(2nL)^{\vert \gamma \vert -1} \mathbf{m}'_{\alpha'+\gamma'}m^{(n)}_{l+\vert \gamma \vert _{n}} \leq C^{2}A^{l+\vert \gamma \vert + \vert \alpha' \vert} (2nL)^{\vert \gamma \vert} \mathbf{m}'_{\alpha'+\gamma'}m^{(n)}_{l+\vert \gamma \vert _{n}}$,
\end{center}
and (2.2) holds for $\vert \gamma \vert =k+1$ in this case.}
\par{If $\vert \gamma' \vert < \vert \gamma \vert _{n}$, then we can write $\mathbf{X}_{\gamma}=X_{i_{1}}...X_{i_{\mu}}X_{n}\mathbf{X}_{\delta}$ for some $i_{1},...,i_{\mu} \in \left\{1,...,n-1 \right\}$, $\delta \in M(n)$. We have
\begin{center}
$[X_{i_{1}}...X_{i_{\mu}},X_{n}]=\sum_{\nu=1}^{\mu}c_{i_{\nu}}X_{i_{1}}...X_{i_{\nu-1}}X_{i_{\nu+1}}...X_{i_{\mu}}$.
\end{center}
}
\par{Using this formula and the induction hypothesis we can prove in a similar manner than (2.2) is valid in this case also.}

\end{proof}

\begin{remark}
\normalfont
If $X_{1},...,X_{n}$ are commuting Hermitian or skew-Hermitian operators in a Hilbert space $\mathcal{H}$ defined on a common invariant domain $D$ and if $m^{(1)},...,m^{(n)}$ are sequences of positive numbers which satisfy (A2), then $u \in D$ belongs to $\mathcal{S}_{\mathbf{m}}(\mathbf{X})$ if and only if $u \in \mathcal{S}_{\mathbf{m}'}(\mathbf{X}')\cap \mathcal{S}_{m^{(n)}}(X_{n})$.

\par{ Indeed, if $u \in \mathcal{S}_{\mathbf{m}'}(\mathbf{X}')\cap \mathcal{S}_{m^{(n)}}(X_{n}) \cap D$, $C$ and $A$ are positive constants as in the begining of the proof of Proposition 1 and if $\alpha \in M(n)$, then
\begin{center}
$\vert (\mathbf{X}_{\alpha}u,u) \vert =\vert ((X_{n}^{\vert \alpha \vert _{n}}\mathbf{X}'_{\alpha'}u,u) \vert \leq \Vert (X_{n}^{\vert \alpha \vert _{n}}u \Vert \text{ } \Vert \mathbf{X}'_{\alpha'} u \Vert \leq CA^{\vert \alpha' \vert} \mathbf{m}'_{\alpha'} \cdot CA^{\vert \alpha \vert _{n}}m^{(n)}_{\vert \alpha \vert _{n}} =C^{2}A^{\vert \alpha \vert}\mathbf{m}_{\alpha}$.
\end{center} 
}
\end{remark}

\begin{theorem}
Let $X_{1},...,X_{n}$ be Hermitian or skew-Hermitian operators in a Hilbert space $\mathcal{H}$ defined on a common invariant domain $D$, $[X_{i},X_{j}]=c_{ij}I$ for some constants $c_{ij} \in \mathbb{C}$, $ i,j \in \left\{ 1,...,n \right\}$. Let also $m^{(1)},...,m^{(n)}$ be sequences of positive numbers which satisfy (A0)-(A2) and also (A3) for all pairs $(i,j)$ with $c_{ij}\neq 0$. Then $u \in D$ belongs to $\mathcal{S}_{\mathbf{m}}(\mathbf{X})$ if and only if $u\in \mathcal{S}_{m^{(1)}}(X_{1})\cap...\cap\mathcal{S}_{m^{(n)}}(X_{n})$.
\end{theorem}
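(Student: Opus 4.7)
The forward direction is immediate: for each $j\in\{1,\ldots,n\}$, specializing the defining inequality of $\mathcal{S}_{\mathbf{m}}(\mathbf{X})$ to the multiindex $\alpha=(j,j,\ldots,j)\in M(n)$ of length $k$ gives $\Vert X_j^k u\Vert \leq CA^k m^{(j)}_k$, so $u\in\mathcal{S}_{m^{(j)}}(X_j)$. The substance of the theorem lies in the converse, which I would establish by induction on $n$, the case $n=1$ being tautological.

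\medskip
\noindent For the inductive step, suppose the statement has been proved for any family of $n-1$ operators satisfying the hypotheses. Given $u\in D\cap\bigcap_{j=1}^n \mathcal{S}_{m^{(j)}}(X_j)$, the family $X_1,\ldots,X_{n-1}$ inherits the commutation relations, the sequences $m^{(1)},\ldots,m^{(n-1)}$ inherit logarithmic convexity and the Komatsu condition, and the relevant (A3) hypotheses restrict to the smaller family; the inductive hypothesis therefore yields $u\in\mathcal{S}_{\mathbf{m}'}(\mathbf{X}')$. Combining this with $u\in\mathcal{S}_{m^{(n)}}(X_n)$, the plan is to invoke Proposition 2.4 (with $X_n$ played in the role of the last operator) to conclude $u\in\mathcal{S}_{\mathbf{m}}(\mathbf{X})$.

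\medskip
\noindent The apparent gap, and the main obstacle, is that Proposition 2.4 demands (A3) for \emph{every} pair $(m^{(j)},m^{(n)})$ with $j<n$, whereas Theorem 2.6 only provides (A3) for those pairs with $c_{jn}\neq 0$. The fix is to reinspect the proof of Proposition 2.4 and observe that (A3) for the pair $(m^{(j)},m^{(n)})$ is invoked \emph{solely} to dominate the commutator contribution $(i-1)c_j X_n^{i-1}$ coming from the identity $X_j X_n^i = X_n^i X_j + (i-1)c_j X_n^{i-1}$ in the case $\vert\gamma'\vert\geq\vert\gamma\vert_n$, and analogously, in the case $\vert\gamma'\vert<\vert\gamma\vert_n$, to dominate each summand of $[X_{i_1}\cdots X_{i_\mu},X_n]=\sum_{\nu}c_{i_\nu}X_{i_1}\cdots\widehat{X_{i_\nu}}\cdots X_{i_\mu}$. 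Whenever the relevant constant $c_j$ (respectively $c_{i_\nu}$) vanishes the corresponding term is absent and (A3) is not used; whenever it is nonzero, (A3) for the required pair is available by hypothesis. Thus Proposition 2.4 applies verbatim under the weaker assumption, and the induction closes with no new estimates required. The only delicate point is to verify that this selective use of (A3) is compatible with the bookkeeping in both cases of the induction inside the proof of Proposition 2.4 — in particular, that every surviving term in the commutator expansions is indexed by a pair for which (A3) has indeed been assumed.
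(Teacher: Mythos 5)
Your proposal follows essentially the same route as the paper: the paper's proof likewise reduces the converse (via the trivial forward direction and induction on $n$) to showing $\mathcal{S}_{\mathbf{m}'}(\mathbf{X}')\cap\mathcal{S}_{m^{(n)}}(X_{n})\cap D\subset\mathcal{S}_{\mathbf{m}}(\mathbf{X})$, and then re-runs the induction on $\vert\gamma\vert$ from Proposition 2.4, observing that when $[X_{j},X_{n}]=0$ the commutator term is absent and (A3) for that pair is never needed. The point you flag as delicate is left equally implicit in the paper (whose estimate in Proposition 2.4 actually invokes (A3) for a pigeonhole-selected pair $(m^{(p)},m^{(n)})$ rather than for the commutator pair itself), so your write-up matches the paper's proof both in strategy and in level of detail.
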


\begin{proof}
It is sufficient to prove that if $u \in  \mathcal{S}_{\mathbf{m}'}(\mathbf{X}')\cap \mathcal{S}_{m^{(n)}}(X_{n}) \cap D$ then $u \in \mathcal{S}_{\mathbf{m}}(\mathbf{X})$. We can proceed as in the proof of Proposition 1. We can prove by induction on $\vert \gamma \vert$ that if $u \in \mathcal{S}_{\mathbf{m}'}(\mathbf{X}')\cap \mathcal{S}_{m^{(n)}}(X_{n}) \cap D$, then $u$ satisfies (2.2). If $\gamma \in M(n)$, $\gamma = \delta \vee (j) \vee (n)^{i}$ and if $[X_{j},X_{n}]= 0$, then the hypothesis (A3) for the pair $(j,n)$ is not necessary.

\end{proof}

\begin{corollary}
Let $X_{1},...,X_{n}$ be Hermitian or skew-Hermitian operators in a Hilbert space $\mathcal{H}$ defined on a common invariant domain $D$,
\begin{equation*}
 [X_{i},X_{j}]=[Y_{i},Y_{j}]=0, \forall i,j \in \left\{i,...,n \right\},  [X_{i},Y_{j}]=0, \forall i,j \in \left\{i,...,n \right\}, i\neq j, [X_{i},Y_{i}]=c_{i}I, 
\end{equation*}
for some $c_{i} \in \mathbb{C} \setminus \left\{0 \right\}, \forall i \in \left\{i,...,n \right\}$.
 \par{ Let us also assume that $m^{(1)},...,m^{(n)},q^{(1)},...,q^{(n)}$ are sequences of positive numbers which satisfy (A0) - (A2) and that the pair $(m^{(i)},q^{(i)})$ satisfies (A3) $\forall i \in \left\{i,...,n \right\}$. If $u \in D$, then 
\begin{equation*} 
 u \in \mathcal{S}_{({m^{(1)},...,m^{(n)},q^{(1)},...,q^{(n)}})}(X_{1},...,X_{n},Y_{1},...,Y_{n})
 \end{equation*}
 if and only if 
 \begin{equation*}
 u \in \mathcal{S}_{\mathbf{m}}(\mathbf{X}) \cap \mathcal{S}_{\mathbf{q}}(\mathbf{Y})
 \end{equation*}
 and if and only if 
 \begin{equation*}
 u\in \left( \bigcap_{i=1}^{n} \mathcal{S}_{m^{i}}(X_{i}) \right) \bigcap \left( \bigcap_{i=1}^{n} \mathcal{S}_{q^{(i)}}(Y_{i})\right).
\end{equation*}}

\end{corollary}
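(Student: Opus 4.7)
The plan is to derive the entire corollary from three applications of Theorem 2.5, since the commutator hypotheses $[X_i,Y_i]=c_iI$, $[X_i,Y_j]=0$ for $i\neq j$, and $[X_i,X_j]=[Y_i,Y_j]=0$ already provide exactly the scalar-commutator structure that theorem requires. I would establish (1) $\Leftrightarrow$ (3) directly on the doubled family, and (2) $\Leftrightarrow$ (3) by splitting into the two separately commuting halves.

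For (1) $\Leftrightarrow$ (3), consider the $2n$-tuple $\mathbf{Z}:=(X_1,\ldots,X_n,Y_1,\ldots,Y_n)$ equipped with the concatenated sequence $(m^{(1)},\ldots,m^{(n)},q^{(1)},\ldots,q^{(n)})$. All commutators $[Z_i,Z_j]$ are scalar multiples of the identity; the nonzero ones are exactly $[X_i,Y_i]=c_iI$, so the pairs of sequences for which (A3) is needed by Theorem 2.5 are precisely $(m^{(i)},q^{(i)})$, which is what the corollary assumes. Every sequence satisfies (A0)-(A2), and $D$ is a common invariant domain for the whole family. Theorem 2.5 therefore yields (1) $\Leftrightarrow$ (3).

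For (2) $\Leftrightarrow$ (3), apply Theorem 2.5 separately to $\mathbf{X}=(X_1,\ldots,X_n)$ and to $\mathbf{Y}=(Y_1,\ldots,Y_n)$. In both subfamilies every internal commutator vanishes, so hypothesis (A3) is vacuous (this is the setting of Remark 2.6); for $u\in D$ one obtains $u\in\mathcal{S}_{\mathbf{m}}(\mathbf{X})\Leftrightarrow u\in\bigcap_{i=1}^n\mathcal{S}_{m^{(i)}}(X_i)$ and the analogous statement for $\mathbf{Y}$. Intersecting the two gives (2) $\Leftrightarrow$ (3), closing the chain of equivalences. No substantive obstacle is expected: the argument is pure bookkeeping, and the only point worth checking is that $D$, being invariant under every $X_i$ and $Y_j$, serves automatically as a common invariant domain for each of the three subfamilies invoked, so the domain hypotheses of Theorem 2.5 are verified in all three applications.
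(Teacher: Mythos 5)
Your proof is correct and is exactly the intended derivation: the paper states this corollary immediately after its main theorem without a separate proof, and your applications of that theorem to the doubled family $(X_{1},\ldots,X_{n},Y_{1},\ldots,Y_{n})$ (giving the first equivalence) and to the two internally commuting halves (giving the second) supply precisely the bookkeeping the paper leaves implicit. The only slip is in the numbering: the main theorem is Theorem 2.6 and the commuting-operators remark is Remark 2.5, not the other way around.
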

\begin{remark}
\normalfont
Let us assume that $\pi$ is a strongly continuous representation of a Lie group $G$ on $\mathcal{H}$ and that $X_{j}$ is the infinitesimal generator of the strongly continuous one parameter group $t\mapsto \pi (\text{exp}tY_{j})$ for $j \in \left\{1,...,n \right\}$, where $\left\{ Y_{1},...,Y_{n} \right\}$ is a basis for the Lie algebra of the Lie group $G$. Then, accordingly to Theorem 1.1 from [Go], $C^{\infty}(\mathbf{X})= \bigcap_{j=1}^{n} C^{\infty}(X_{j})$. Therefore in this case $ \bigcap_{j=1}^{n} \mathcal{S}_{m^{(j)}}(X_{j}) \subset C^{\infty}(\mathbf{X})$ and, if the hypothesis of Theorem 2.6 is satisfied, then
\begin{center}
$\mathcal{S}_{\mathbf{m}}(\mathbf{X})=\mathcal{S}_{m^{(1)}}(X_{1})\cap...\cap\mathcal{S}_{m^{(n)}}(X_{n})$. 
\end{center}
\end{remark}

\par{ We shall apply now these results to the case when $X_{1},...,X_{n},Y_{1},...,Y_{n},Z$ , $X_{j}= \partial/\partial x_{j}, Y_{j} = x_{j}, \forall j \in \left\{1,...,n \right\}$, $Z=I$ are the infinitezimal generators of a representation of the Heisenberg algebra on $ \mathcal{H} = L^{2} (\mathbb{R}^{n})$.  In this case the hypothesis of Corollary 2.7 is satisfied. Hence, for a function $\varphi:\mathbb{R}^{n} \rightarrow \mathbb{C}$ there exists some positive constants $A, B$ and $C$ so that 

\begin{equation}
\Vert x^{\alpha} \partial^{\beta} \varphi \Vert \leq CA^{\vert \alpha \vert} B^{\vert \beta \vert} m_{\alpha} q_{\beta}, \forall \alpha, \beta \in \mathbb{N}^{n}
\end{equation}
if and only if there exists some positive constants $A, B$ and $C$ so that
\begin{equation}
\Vert x^{\alpha}  \varphi \Vert \leq CA^{\vert \alpha \vert}  m_{\alpha} ,\text{ }\Vert  \partial^{\beta} \varphi \Vert \leq C B^{\vert \beta \vert}  q_{\beta}, \forall \alpha, \beta \in \mathbb{N}^{n}. 
\end{equation}
}
\par{Now let us assume that $\varphi \in \mathcal{S}$ and 
\begin{equation}
\Vert x^{\alpha}  \varphi \Vert_{\infty} \leq C_{1}A_{1}^{\vert \alpha \vert}  m_{\alpha} ,\text{ }\Vert  \partial^{\beta} \varphi \Vert_{\infty} \leq C_{1}B_{1} ^{\vert \beta \vert}  q_{\beta}, \forall \alpha, \beta \in \mathbb{N}^{n},
\end{equation}
for some positive constants $A_{1},B_{1}$ and $C_{1}$.}
\par{Then 
\begin{center}
$\Vert x^{\alpha}  \varphi \Vert = \left(\int_{\mathbb{R}^{n}} \vert x^{\alpha}  \varphi(x) \vert ^{2} \text{ d}x \right)^{1/2} \leq \Vert (1+\vert x \vert ^{2})^{(1/2)([n/2]+1)} x^\alpha \varphi \Vert _{\infty} \left(\int_{\mathbb{R}^{n}} (1+\vert x \vert^{2}) ^{-[n/2]-1} \text{ d}x \right)^{1/2}$.
\end{center}
}
\par{ From (A2) we obtain that
\begin{center}
$\Vert x^{\alpha}  \varphi \Vert \leq C_{2} A_{1}^{\vert \alpha \vert + [n/4] +2} H^{\vert \alpha \vert + [n/4] +2}m_{\alpha} \leq C_{3} (A_{1}H)^{\vert \alpha \vert } m_{\alpha}, \forall \alpha \in \mathbb{N}^{n}$.
\end{center}
}
\par{On the other hand
\begin{equation*}
\begin{split}
\Vert \partial^{\beta} \varphi \Vert &=\left(\int_{\mathbb{R}^{n}} \partial^{\beta}  \varphi(x) \partial^{\beta}  \bar{\varphi}(x)\text{ d}x \right)^{1/2}=\vert \int_{\mathbb{R}^{n}} \varphi(x) \partial^{2\beta}  \bar{\varphi}(x)\text{ d}x \vert^{1/2} \leq \Vert \varphi \Vert _{L^{1}}^{1/2} \Vert \partial^{2\beta} \varphi \Vert _{\infty}^{1/2} \leq\\
&\leq \Vert \varphi \Vert _{L^{1}}^{1/2}(C_{1}B_{1}^{2\vert \beta \vert}m_{2\beta})^{1/2} \leq C_{4}(B_{1}H)^{\vert \beta \vert}m_{\beta}.
\end{split}
\end{equation*}
}
\par{ Therefore if $\varphi \in \mathcal{S}$ satisfies (2.5), then it satisfies also (2.4). We know also that $\varphi \in \mathcal{S}_{m}^{q}$ if and only if $\varphi$ satisfies (2.3) (see e. g. Lemma 2 and Lemma 3 from [Pa1]). Hence we have derived Chung-Chung-Kim's result: $\varphi \in \mathcal{S}_{m}^{q}$ if and only if $\varphi \in \mathcal{S}$ and $\varphi$ satisfies (2.5).}

\section{More general commutation relations}\label{sectionCenCom}
\par{The commutation relations between the operators $X_{j}$ we considered in the previous section are the commutation relations satisfied by the operators which generate the Heisenberg algebra. However, one can prove in a similar manner results for intersections of spaces of Gelfand-Shilov-Roumieu vectors if the generators of the Lie algebra of operators satisfy more general commutation relations. The simplest one is the following:
\begin{theorem}
Let $X_{1},...,X_{n}$ be Hermitian or skew-Hermitian operators in a Hilbert space $\mathcal{H}$ defined on a common invariant domain $D$, $m^{(1)}=...=m^{(n)}=m$, where $m$ is a sequence of positive numbers which satisfies (A0)-(A2) and (A3'): $p m_{p-1} \leq L m_{p}, \forall p \geq1$ for some constant $L \geq 1$. If $[X_{i},X_{j}] \in \text{span} (\left\{ X_{1},...,X_{n} \right\})$ for every $i,j \in \left\{ 1,...,n-1 \right\}$, then $u\in D$ belongs to $\mathcal{S}_{\mathbf{m}}(\mathbf{X})$ if and only if $u \in \cap_{j=1}^{n} \mathcal{S}_{m}(X_{j})$.
\end{theorem}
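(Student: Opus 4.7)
Plan: The forward inclusion $\mathcal{S}_m(\mathbf{X}) \subseteq \bigcap_{j=1}^{n} \mathcal{S}_m(X_j)$ is immediate. For the converse, fix $u \in D \cap \bigcap_{j=1}^{n} \mathcal{S}_m(X_j)$ with $\|X_j^l u\| \leq CA^l m_l$ for all $j,l$. Because all sequences $m^{(j)}$ collapse to the single log-convex sequence $m$ with $m_0 = 1$, convexity of $\log m_\bullet$ together with $m_0=1$ gives $m_p m_q \leq m_{p+q}$, so $\mathbf{m}_\gamma \leq m_{|\gamma|}$, and (A2) gives the reverse $m_{|\gamma|} \leq H^{|\gamma|} \mathbf{m}_\gamma$. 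By Lemma 2.3 it then suffices to prove
$$|(\mathbf{X}_\gamma u, u)| \leq C' (A')^{|\gamma|} m_{|\gamma|}, \quad \forall \gamma \in M(n).$$

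I would follow the inductive scheme of Proposition 2.4, proving by induction on $|\gamma|$ the refined estimate
$$|(X_n^l \mathbf{X}_\gamma \mathbf{X}'_{\alpha'} u, u)| \leq C^2 K^{|\gamma|} (A')^{l + |\alpha'| + |\gamma|} m_{l + |\alpha'| + |\gamma|},$$
with $K, A'$ chosen large enough to absorb $n$, $\max_{j,k} |c_{jn}^k|$, and the constants from (A2), (A3'). The base case $|\gamma|=0$ follows from Cauchy--Schwarz, $|(X_n^l \mathbf{X}'_{\alpha'} u, u)| \leq \|X_n^l u\| \cdot \|\mathbf{X}'_{\alpha'} u\|$, combined with a secondary induction on the number of generators, in which the same theorem is applied to the subsystem $\mathbf{X}' = (X_1,\ldots,X_{n-1})$ (whose pairwise commutators still lie in the span by hypothesis). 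The inductive step splits, as in Proposition 2.4, into the subcases $|\gamma'| \geq |\gamma|_n$ (where one writes $\mathbf{X}_\gamma = \mathbf{X}_\delta X_j X_n^i$) and $|\gamma'| < |\gamma|_n$ (where one writes $\mathbf{X}_\gamma = X_{i_1}\cdots X_{i_\mu} X_n \mathbf{X}_\delta$).

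The essential new feature is that $[X_j, X_n] = \sum_{k=1}^{n} c_{jn}^k X_k$ is a linear combination of the generators rather than a scalar, so
$$X_j X_n^i = X_n^i X_j + \sum_{r=0}^{i-1} \sum_{k=1}^{n} c_{jn}^k\, X_n^r X_k X_n^{i-1-r}.$$
Substituting this inside $(X_n^l \mathbf{X}_\delta X_j X_n^i \mathbf{X}'_{\alpha'} u, u)$ produces one ``translation'' term with $X_j$ absorbed into the right-hand block (giving a new middle word of length $|\gamma|-1$), plus $in$ commutator terms, each with a middle word of length $|\gamma|-1$ in which an $X_k$ (possibly $X_n$ itself) is embedded inside the $X_n$-block. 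All such terms fall under the induction hypothesis for $|\gamma|-1$. The combinatorial factor $i$ is absorbed by (A3') applied to $m$ alone --- exactly the role played by (A3) in Proposition 2.4 --- while the factors $n$ and $\max_{j,k}|c_{jn}^k|$ are absorbed into $K$ and $A'$ (just as $2nL$ was in Proposition 2.4). The second subcase $|\gamma'|<|\gamma|_n$ is handled dually by expanding $[X_{i_1}\cdots X_{i_\mu}, X_n] = \sum_{\nu=1}^{\mu} X_{i_1}\cdots X_{i_{\nu-1}}[X_{i_\nu},X_n]X_{i_{\nu+1}}\cdots X_{i_\mu}$ into a sum of $\mu \leq |\gamma|$ shorter words.

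The main obstacle is the bookkeeping for these non-central commutator terms: the inserted $X_k$ may be any generator, so the $X_n$-block gets ``polluted'' with $\mathbf{X}'$-operators in the middle rather than simply shortened by one, and the distribution of $|\gamma|_j$ after each commutation is not preserved. However, precisely because all $m^{(j)} = m$, the right-hand side of the refined estimate depends only on the total length $l + |\alpha'| + |\gamma|$ and not on the individual counts $|\gamma|_j$; this makes the induction robust under such rearrangements, and the argument of Proposition 2.4 goes through with the modifications above.
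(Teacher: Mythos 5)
Your proposal follows essentially the same route as the paper: after reducing, via Lemma 2.3 and the observation $m_{p}m_{q}\le m_{p+q}\le H^{p+q}m_{p}m_{q}$ (the paper's Remark 3.2), to inner-product bounds, you prove by induction on $\vert\gamma\vert$ exactly the paper's estimate (3.2), expanding $[X_{j},X_{n}^{i}]=\sum_{q=0}^{i-1}\sum_{p=1}^{n}c_{j,p}X_{n}^{q}X_{p}X_{n}^{i-q-1}$ and absorbing the combinatorial factor $i$ through (A3'), with the passage from the one-variable-splitting statement (Proposition 3.3) to the theorem done, as in the paper, by iterating over the number of generators. The only real difference is cosmetic: you retain the two-subcase split $\vert\gamma'\vert\ge\vert\gamma\vert_{n}$ versus $\vert\gamma'\vert<\vert\gamma\vert_{n}$ from Proposition 2.4, which is unnecessary here for precisely the reason you yourself point out --- the right-hand side depends only on the total length $l+\vert\alpha'\vert+\vert\gamma\vert$ --- so the paper's Proposition 3.3 handles every $\gamma$ ending in $X_{n}$ by the single decomposition $\mathbf{X}_{\gamma}=\mathbf{X}_{\delta}X_{j}X_{n}^{i}$.
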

}
\par{Theorem 3.1 is an immediate consequence of Proposition 3.3 from below. In the proof of Proposition 3.3 we shall use the following remark.}
\begin{remark}
\normalfont
If $m^{(1)}=...=m^{(n)}=m$, where $m$ is a sequence of positive numbers which satisfies (A0)-(A2), then $u \in \mathcal{S}_{\mathbf{m}}(\mathbf{X})$ if and only if $ u\in C^{\infty}(\mathbf{X})$ and there exists $A,C>0$ such that $\Vert \mathbf{X}_{\alpha}u \Vert \leq CA^{\vert \alpha \vert}m_{\vert \alpha \vert}, \forall \alpha \in M(n) $. Therefore if $X_{1},...,X_{n}$ are Hermitian or skew-Hermitian operators in a Hilbert space $\mathcal{H}$, then $u\in \mathcal{S}_{\mathbf{m}}(\mathbf{X})$ if and only if $u\in C^{\infty}(\mathbf{X})$ and there exist two positive constants $A$ and $C$ so that
\begin{equation}
\vert (\mathbf{X}_{\alpha}u,u) \vert \leq CA^{\vert \alpha \vert}m_{\vert \alpha \vert},\forall \alpha \in M(n).
\end{equation}

\end{remark}

\begin{proposition}
Let $X_{1},...,X_{n}$ be Hermitian or skew-Hermitian operators in a Hilbert space $\mathcal{H}$ defined on a common invariant domain $D$, $m^{(1)}=...=m^{(n)}=m$, where $m$ is a sequence of positive numbers which satisfy (A0)-(A2) and (A3'). If $[X_{j},X_{n}]\in \text{span} (\left\{ X_{1},...,X_{n} \right\})$ for every $j \in \left\{ 1,...,n-1 \right\}$, then $u\in D$ belongs to $\mathcal{S}_{\mathbf{m}}(\mathbf{X})$ if and only if $u \in\mathcal{S}_{\mathbf{m}'}(\mathbf{X}')\cap \mathcal{S}_{m^{(n)}}(X_{n})$.
\end{proposition}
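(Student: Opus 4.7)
The plan is to adapt the induction scheme of Proposition 2.5 to our weaker hypotheses. Two changes have to be accommodated: first, the common sequence $m^{(1)}=\cdots=m^{(n)}=m$ and the replacement of (A3) by (A3'); second, the relaxation of the scalar relation $[X_j,X_n]=c_j I$ to $[X_j,X_n]=\sum_{k=1}^{n}c_{jk}X_k$. Using Lemma 2.3 together with Remark 3.2, it is enough to prove by induction on $|\gamma|$ that for $u\in\mathcal{S}_{\mathbf{m}'}(\mathbf{X}')\cap\mathcal{S}_{m^{(n)}}(X_n)\cap D$ there exist constants $C,K,A>0$ with
\begin{equation*}
|(X_n^{l}\mathbf{X}_\gamma\mathbf{X}'_{\alpha'}u,u)|
\le C^{2}\,K^{|\gamma|}\,A^{l+|\gamma|+|\alpha'|}\,m_{\,l+|\gamma|+|\alpha'|},
\qquad l\in\mathbb{N},\ \alpha'\in M(n-1),\ \gamma\in M(n).
\end{equation*}
The special case $l=0$, $\alpha'=\emptyset$ then yields the estimate of Remark 3.2, hence $u\in\mathcal{S}_{\mathbf{m}}(\mathbf{X})$.

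The base $|\gamma|=0$ follows by Schwarz's inequality from the two inclusion-hypotheses on $u$, together with the sub-multiplicativity $m_p m_q\le m_{p+q}$ which is a consequence of (A0) and (A1). For the induction step I follow the two-case split of Proposition 2.5 according to whether $|\gamma'|\ge|\gamma|_n$ or $|\gamma'|<|\gamma|_n$: in the first case I write $\mathbf{X}_\gamma=\mathbf{X}_\delta X_j X_n^{i}$ with $j\le n-1$ and $i\ge 1$, in the second $\mathbf{X}_\gamma=X_{i_1}\cdots X_{i_\mu}X_n\mathbf{X}_\delta$; the trivial sub-cases (where $\mathbf{X}_\gamma$ ends in a non-$n$ index, or equals a pure $X_n^{i}$) are handled directly by the induction hypothesis. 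One commutation then produces a \emph{straight} term and a sum of commutator terms via the Lie-algebra identity
\begin{equation*}
X_j X_n^{i}=X_n^{i}X_j+\sum_{r=0}^{i-1}\sum_{k=1}^{n}c_{jk}\,X_n^{r}X_k X_n^{i-1-r}
\end{equation*}
(and its analogue for $[X_{i_1}\cdots X_{i_\mu},X_n]$). The straight term recombines into $(X_n^{l'}\mathbf{X}_{\gamma''}\mathbf{X}'_{\alpha''}u,u)$ with $|\gamma''|=|\gamma|-1$ and total operator length $l+|\gamma|+|\alpha'|$ unchanged, so the induction hypothesis bounds it by $C^{2}K^{|\gamma|-1}A^{l+|\gamma|+|\alpha'|}m_{l+|\gamma|+|\alpha'|}$.

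The principal new difficulty lies in the commutator terms. In the scalar setting of Proposition 2.5 the identity $[X_j,X_n^{i}]=ic_jX_n^{i-1}$ shortens the product by two, so those terms carry two spare powers of $A$ from the induction hypothesis, easily absorbing the structure constants. Here the general identity shortens it by only one, and each commutator summand matches $(X_n^{l}\mathbf{X}_{\gamma''}\mathbf{X}'_{\alpha'}u,u)$ with $|\gamma''|=|\gamma|-1$ and total length $l+|\gamma|+|\alpha'|-1$, giving the weaker inductive bound $C^{2}K^{|\gamma|-1}A^{l+|\gamma|-1+|\alpha'|}m_{l+|\gamma|-1+|\alpha'|}$. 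There are $O(i\cdot n)$ (resp.\ $O(\mu\cdot n)$) such summands with coefficients bounded by $M:=\max_{j,k}|c_{jk}|$, so I must absorb the combinatorial factor $i$ (resp.\ $\mu$). This is exactly where (A3') enters: since $i\le|\gamma|\le l+|\gamma|+|\alpha'|$, (A3') applied to $p=l+|\gamma|+|\alpha'|$ yields $i\,m_{l+|\gamma|-1+|\alpha'|}\le L\,m_{l+|\gamma|+|\alpha'|}$, and analogously for $\mu$. The residual prefactor $nML$ is absorbed by taking $A\ge nML$, and $K\ge 2$ then suffices to close the induction against the two surviving contributions.
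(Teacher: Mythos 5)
Your argument is correct and is essentially the paper's proof: the same induction on $|\gamma|$ for the bound $|(X_n^{l}\mathbf{X}_\gamma\mathbf{X}'_{\alpha'}u,u)|\le C^{2}K^{|\gamma|}A^{l+|\gamma|+|\alpha'|}m_{l+|\gamma|+|\alpha'|}$ (the paper takes $K=(n+1)L$), the same expansion of $[X_j,X_n^{i}]$ into $in$ terms of length one less, and the same use of (A3') with $i\le l+|\gamma|+|\alpha'|$ to absorb the combinatorial factor, the structure constants being absorbed into $A$. The only difference is cosmetic: because (A3') concerns the single sequence $m$ and the estimate depends only on total length, the case split $|\gamma'|\ge|\gamma|_n$ versus $|\gamma'|<|\gamma|_n$ inherited from Proposition 2.4 is not needed, and the paper works only with the decomposition $\mathbf{X}_\gamma=\mathbf{X}_\delta X_jX_n^{i}$; your second case is redundant but handled correctly.
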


\begin{proof}The proof is similar to the proof of Proposition 2.4. So, let $[X_{j},X_{n}]= \sum_{p=1}^{n}{c_{j,p}X_{p}},\forall j \in \left\{ 1,...,n-1 \right\}$ and $L$ be the constant from the inequality (A3'). Let $A\geq 1$ and $C$ be constants as in the begining of the proof of Proposition 2.4. We may assume also that $A \geq \vert c_{j,p} \vert, \forall j \in \left\{1,...,n-1 \right\}, \forall p \in \left\{1,...,n \right\}$. We shall prove  by induction on $\vert \gamma \vert$ that 
\begin{equation}
\vert (X_{n}^{l}X_{\gamma}X'_{\alpha'}u,u) \vert \leq C^{2}((n+1)L)^{\vert \gamma \vert} A^{\vert \alpha' \vert + \vert \gamma \vert +l} m_{\vert \alpha' \vert + \vert \gamma \vert +l}, \forall l \in \mathbb{N}, \forall \alpha' \in M(n-1), \forall \gamma \in M(n).
\end{equation}
\par{Again, if $\vert \gamma \vert = 0$, then (3.2) is clearly true. We assume that the estimate is true for all multiindices of length $k$ and let $\gamma$ be a multindex of length $k+1$. If $\gamma = (i_{1},...,i_{k+1})$ and $i_{k+1} \in \left\{ 1,...,n-1 \right\}$ we have nothing to prove. So we assume that $\mathbf{X}_{\gamma}=\mathbf{X}_{\delta}X_{j}X_{n}^{i}$ for some $\delta \in M(n)$, $j \in \left\{ 1,...,n-1 \right\}$ and  $i \in \mathbb{N}^{\ast}$. We have
\begin{equation*}
[X_{j},X_{n}^{i}]=\sum_{q=0}^{i-1}\sum_{p=1}^{n} c_{j,p}X_{n}^{q}X_{p}X_{n}^{i-q-1}.
\end{equation*}
}
\par{Hence
\begin{equation*}
\begin{split}
\vert (X_{n}^{l}X_{\gamma} X'_{\alpha'}u,u) \vert &\leq \vert (X_{n}^{l}X_{\delta}X_{n}^{i}X_{j}X'_{\alpha'}u,u) \vert + \sum_{q=0}^{i-1}\sum_{p=1}^{n} \vert c_{j,p} \vert \text{ } \vert (X_{n}^{l}X_{\delta}X_{n}^{q}X_{p} X_{n}^{i-q-1}X'_{\alpha'}u,u) \vert \leq\\
&\leq C^{2}((n+1)L)^{\vert \gamma \vert -1} A^{l+\vert \gamma \vert + \vert \alpha' \vert } m_{\vert \alpha' \vert + \vert \gamma \vert +l} +\\
&+C^{2} ni \left(\text{ max} \left\{\vert c_{j,p} \vert ; p=1,...,n \right\} \right) ((n+1)L)^{\vert \gamma \vert -1} A^{l+\vert \gamma \vert + \vert \alpha' \vert -1 } m_{\vert \alpha' \vert + \vert \gamma \vert +l-1} \leq\\
&\leq C^{2}((n+1)L)^{\vert \gamma \vert -1} A^{l+\vert \gamma \vert + \vert \alpha' \vert } m_{\vert \alpha' \vert + \vert \gamma \vert +l} +\\
&+C^{2} nL  ((n+1)L)^{\vert \gamma \vert -1} A^{l+\vert \gamma \vert + \vert \alpha' \vert  } m_{\vert \alpha' \vert + \vert \gamma \vert +l} \leq\\
&\leq C^{2}((n+1)L)^{\vert \gamma \vert } A^{l+\vert \gamma \vert + \vert \alpha' \vert } m_{\vert \alpha' \vert + \vert \gamma \vert +l} .
\end{split}
\end{equation*}
}
\end{proof}
\par{In a future paper we intend to consider the case when the condition $m^{(1)}=...=m^{(n)}=m$ is not satisfied.}


\begin{thebibliography}{100000000}
\bibitem[Ba]{Ba}K. I. Babenko, On a new quasianalicity problem and on the Fourier transform of entire functions (in Russian), \textit{Trudy Moskovkogo Matematiceskogo Obschestva}, 5(1956), 523-542
\bibitem[CCK]{CCK} J. Chung, S-Y. Chung, D.Kim, Characterzations of the Gelfand-Shilov spaces via Fourier transforms, \textit{Proceedings of the American Mathematical Society}, 124, 7(1996), 2101-2108
\bibitem[Ei]{Ei} S. J. L. van Eijndhoven, Functional analytic characterizations of the Gelfand-Shilov spaces $\mathcal{S}_{\alpha}^{\beta}$, \textit{Proceedings of the Koningklijke Akademie von Wetenschappen}, A 90 (1987), 133-144
\bibitem[EG]{EG} S. J. L. van Eijndhoven, J. de Graaf, \textit{Trajectory Spaces, Generalized Functions and Unbounded Operators}, Lecture Notes in Mathematics, 1162, Springer Verlag, Berlin-Heidelberg, 1985
\bibitem[GS]{GS} I. M. Gelfand, G. E. Shilov, \textit{Generalized Functions, vol. 2, Spaces of Fundamental and Generalized Functions}, Academic Press, New York, 1968 (translated from Russian)
\bibitem[Go]{Go} R. Goodman, Analytic and entire vectors for representations of Lie groups, \textit{Transactions of the American Mathematical Society}, 143 (1969), 55-76
\bibitem [GW]{GW} R. Goodman, N. Wallach, Whittaker vectors and conical vectors, \textit{Journal of Functional Analysis}, 39(1980), 199-279
\bibitem [GZ]{GZ} K. Gr\"ochenig, G. Zimmermann, Spaces of test functions via the STFT, \textit{Journal of Function Spaces and Applications}, 2 (2004), 25-53
\bibitem[Ka]{Ka} A. I. Kashpirovsky, Equality of the spaces $\mathcal{S}_{\alpha} ^{\beta}$ and $\mathcal{S}_{\alpha} \cap \mathcal{S}^{\beta}$, \textit{Functional Analysis and its Applications}, 14 (1978), 60
\bibitem [Ko]{Ko} H. Komatsu, Ultradistributions, I. Structure theorems and a characterization, \textit{Journal of the Faculty of Sciences, University of Tokyo, Section 1A}, 20 (1973), 25-105
\bibitem [Ne]{Ne} E. Nelson, Analytic vectors, \textit{Annals of Mathematics}, 70 (1959), 572-615
\bibitem [Nu]{Nu} A. F. Nussbaum, Quasi-analytic vectors, \textit{Arkiv for Mathematik}, 6 (1965), 179-191
\bibitem [Pa1]{Pa1} M. Pascu, On the topology of Gelfand-Shilov-Roumieu spaces, \textit{Petroleum-Gas University of Ploiesti  Mathematics-Informatics-Physics series}, 60, 1 (2008), 11-18
\bibitem [Pa2]{Pa2} M. Pascu, On the definition of Gelfand-Shilov spaces, \textit{Annals of the University of Bucharest, Mathematical Series}, 59, 1 (2010), 125-133
\bibitem [Ro]{Ro} Ch. Roumieu, Sur quelques extensions de la notion de distribution, \textit{Annales Scientifiques de l'\'Ecole Normale Sup\'erieure, 3-\'eme s\'erie}, 77 (1960), 41-121
\bibitem [Sch] {Sch} L. Schwartz, \textit{Th\'eorie des Distributions}, Hermann, Paris, 1966
\bibitem [Te1]{Te1} N. Teofanov, Ultradistributions and time-frequency analysis, \textit{Operator Theory:Advances and Applications}, 164, (A. Baggiatto, L. Rodino, J. Toft, M. W. Wang, eds.) 173-191, Birkh\"auser, Basel, 2006
\bibitem[Te2]{Te2} N. Teofanov, Modulation spaces, Gelfand-Shilov spaces and pseudodifferential operators, \textit{Sampling Theory in Signal and Image Processing}, 5, 2 (2006), 225-242
\bibitem[TE1]{TE1} A. T. M. Ter Elst, On infinitely differentiable and Gevrey vectors for representations, \textit{Proceedings of the American Mathematical Society}, 112, 3 (1991), 795-802
\bibitem[TE2]{TE2} A. T. M. Ter Elst, Gevrey spaces and their intersections, \textit{Journal of Australian Mathematical Society, Series A}, 54 (1993), 263-286  

\end{thebibliography}
\end{document}